%
%
%
%

\documentclass[runningheads,a4paper]{llncs}
\usepackage{amssymb}
\usepackage{graphicx}
\usepackage{hyperref}
\usepackage{mathabx}
\usepackage{url}
\sloppy
\usepackage{array}   
\newcommand{\keywords}[1]{\par\addvspace\baselineskip
\noindent\keywordname\enspace\ignorespaces#1}
\usepackage{amssymb,amstext}


\def\F{\mathcal F}
\def\P{\mathcal P}
\def\M{\mathcal M}
\def\I{\mathcal I}

\def\prev{\mathbb{P}}

\def\S{\mathcal{S}}

\def\C{\mathcal{C}}

\newcolumntype{L}{>{$}l<{$}}
\newcolumntype{C}{>{$}c<{$}}
\title{Conjunction of Conditional Events and T-norms}

\author{Angelo Gilio\inst{1}\thanks{Both authors contributed equally to the article and are listed alphabetically.}\,\thanks{Retired} \and  Giuseppe Sanfilippo\inst{2}
	 $^{\star}$}
\institute{Department SBAI, University of Rome ``La Sapienza'', Rome, Italy
	\\ \email{angelo.gilio@sbai.uniroma1.it}
	\and
	Department of Mathematics and Computer Science,
	University of Palermo, Italy
	\\ \email{giuseppe.sanfilippo@unipa.it}
}
\authorrunning{Angelo Gilio and Giuseppe Sanfilippo}
\begin{document}
\maketitle
\begin{abstract}
We study the relationship between a notion of conjunction among conditional events, introduced	in recent papers, and the notion of Frank t-norm. By examining different cases, in the setting of coherence,  we show each time that the conjunction coincides with a suitable Frank t-norm. In particular, the conjunction may coincide with the Product t-norm, the Minimum t-norm, and Lukasiewicz t-norm.
We show by a counterexample, that the prevision assessments obtained by  Lukasiewicz t-norm  may be not coherent. Then, we give some conditions of coherence when using Lukasiewicz t-norm. 
\keywords{Coherence,  Conditional Event, Conjunction, Frank t-norm.}
\end{abstract}
\section{Introduction}
In this paper we use the coherence-based approach to probability of de Finetti (\cite{biazzo00,biazzo05,coletti02,CoSV13,CoSV15,defi36,definetti74,gilio02,gilio12ijar,gilio16,gilio13,PfSa19}). We use a notion of conjunction which, differently from other authors, is defined  as a  suitable conditional random quantity with values in the unit interval (see, e.g. \cite{GiSa13c,GiSa13a,GiSa14,GiSa19,SPOG18}).  
We study  the relationship between our notion of conjunction  and the notion of Frank t-norm. For some aspects which relate probability and Frank t-norm see, e.g., \cite{Tasso12,coletti14FSS,coletti04,Dubois86,Flaminio18,Navara05}. We show that, under the hypothesis of logical independence, if the prevision assessments involved with the conjunction $(A|H) \wedge (B|K)$ 
of two conditional events are coherent, then the prevision of the conjunction coincides, for a suitable $\lambda \in [0,+\infty]$, with the Frank t-norm $T_\lambda(x,y)$, where $x=P(A|H), y=P(B|K)$. Moreover, $(A|H) \wedge (B|K)= T_\lambda(A|H,B|K)$. Then, we consider the case $A=B$, by determining the set of all coherent assessment $(x,y,z)$ on $\{A|H,A|K, (A|H) \wedge (A|K)\}$. We show that, under coherence, it holds that  $(A|H) \wedge (A|K)= T_\lambda(A|H,A|K)$, where $\lambda \in [0,1]$.  We also study the particular case where $A=B$ and $HK=\emptyset$. Then, we consider conjunctions of three conditional events and we show that to make prevision assignments by means of the Product t-norm, or the Minimum t-norm, is coherent. Finally, we examine the Lukasiewicz t-norm and we show by a counterexample that coherence is in general not assured. We give some conditions for coherence when the prevision assessments are made by using the  Lukasiewicz t-norm.  

\section{Preliminary Notions and Results}
In our approach, given two events $A$ and $H$, with $H \neq \emptyset$, the conditional event $A|H$ is looked at as a three-valued logical entity which is true, or false, or void, according to whether $AH$ is true, or $\widebar{A}H$ is true, or $\widebar{H}$ is true. We observe that the conditional probability and/or conditional prevision values are assessed in the setting of coherence-based probabilistic approach.
 In numerical terms $A|H$ assumes one of the values $1$, or $0$, or $x$, where $x=P(A|H)$ represents the assessed degree of belief on $A|H$.  Then, $A|H=AH+x\widebar{H}$. 
Given a family $\F = \{X_1|H_1,\ldots,X_n|H_n\}$, for each $i \in \{1,\ldots,n\}$ we denote by $\{x_{i1}, \ldots,x_{ir_i}\}$ the set of possible values  of $X_i$ when  $H_i$ is true; then, for each $i$ and $j = 1, \ldots, r_i$, we set $A_{ij} = (X_i = x_{ij})$. 
We set $C_0 = \widebar{H}_1 \cdots \widebar{H}_n$ (it may be $C_0 = \emptyset$);
moreover, we denote by $C_1, \ldots, C_m$ the constituents
contained in $H_1\vee \cdots \vee H_n$. Hence
$\bigwedge_{i=1}^n(A_{i1} \vee \cdots \vee A_{ir_i} \vee
\widebar{H}_i) = \bigvee_{h = 0}^m C_h$.
With each $C_h,\, h \in \{1,\ldots,m\}$, we associate a vector
$Q_h=(q_{h1},\ldots,q_{hn})$, where $q_{hi}=x_{ij}$ if $C_h \subseteq
A_{ij},\, j=1,\ldots,r_i$, while $q_{hi}=\mu_i$ if $C_h \subseteq \widebar{H}_i$;
with $C_0$ it is associated  $Q_0=\M = (\mu_1,\ldots,\mu_n)$.
Denoting by $\I$ the convex hull of $Q_1, \ldots, Q_m$, the condition  $\M\in \I$ amounts to the existence of a vector $(\lambda_1,\ldots,\lambda_m)$ such that:
$ \sum_{h=1}^m \lambda_h Q_h = \M \,,\; \sum_{h=1}^m \lambda_h
= 1 \,,\; \lambda_h \geq 0 \,,\; \forall \, h$; in other words, $\M\in \I$ is equivalent to the solvability of the system $(\Sigma)$, associated with  $(\F,\M)$,
\begin{equation}\label{SYST-SIGMA}
(\Sigma) \quad
\begin{array}{ll}
\sum_{h=1}^m \lambda_h q_{hi} =
\mu_i \,,\; i \in\{1,\ldots,n\} \,, 
\sum_{h=1}^m \lambda_h = 1,\;\;\lambda_h \geq 0 \,,\;  \,h \in\{1,\ldots,m\}\,.
\end{array}
\end{equation}
Given the assessment $\M =(\mu_1,\ldots,\mu_n)$ on  $\F =
\{X_1|H_1,\ldots,X_n|H_n\}$, let $S$ be the set of solutions $\Lambda = (\lambda_1, \ldots,\lambda_m)$ of system $(\Sigma)$.   
We point out that the solvability of  system $(\Sigma)$  is a necessary (but not sufficient) condition for coherence of $\M$ on $\F$. When $(\Sigma)$ is solvable, that is  $S \neq \emptyset$, we define:
\begin{equation}\label{EQ:I0}
\begin{array}{ll}
I_0 = \{i : \max_{\Lambda \in S}  \sum_{h:C_h\subseteq H_i}\lambda_h= 0\},\;
\F_0 = \{X_i|H_i \,, i \in I_0\},\;\;  \M_0 = (\mu_i ,\, i \in I_0)\,.
\end{array}
\end{equation}
For what concerns the probabilistic meaning of $I_0$, it holds that  $i\in I_0$ if and only if the (unique) coherent extension of $\M$ to $H_i|(\bigvee_{j=1}^nH_j)$ is zero.
Then, the following theorem can be proved  (\cite[Theorem 3]{BiGS08})
\begin{theorem}\label{CNES-PREV-I_0-INT}{\rm [{\em Operative characterization of coherence}]
		A conditional prevision assessment ${\M} = (\mu_1,\ldots,\mu_n)$ on
		the family $\F = \{X_1|H_1,\ldots,X_n|H_n\}$ is coherent if
		and only if the following conditions are satisfied: \\
		(i) the system $(\Sigma)$ defined in (\ref{SYST-SIGMA}) is solvable; (ii) if $I_0 \neq \emptyset$, then $\M_0$ is coherent. }
\end{theorem}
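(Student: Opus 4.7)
The plan is to prove the two implications of the biconditional separately, following the standard strategy for coherence characterizations in the de Finetti framework.

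For the ``only if'' direction, I would start from the assumption that $\M$ is coherent on $\F$ and derive both conditions. Solvability of $(\Sigma)$, i.e.\ condition (i), would follow from a classical separating-hyperplane argument: if $\M$ failed to lie in the convex hull $\I$, then by Farkas' lemma there would exist a vector of stakes $(s_1, \ldots, s_n)$ whose associated combination of conditional bets produces a strictly negative gain on every constituent $C_h$ with $h \geq 1$, hence a sure loss on $H_1 \vee \cdots \vee H_n$---contradicting coherence. Condition (ii) would then come from the hereditary property of coherence: any sub-assessment of a coherent assessment is coherent, so $\M_0$ is automatically coherent on $\F_0$ whenever $I_0 \neq \emptyset$.

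For the ``if'' direction, I would proceed by strong induction on $n = |\F|$, with the base case $n=1$ immediate. In the inductive step, let $\Lambda^* = (\lambda_1^*, \ldots, \lambda_m^*)$ solve $(\Sigma)$. If $I_0 = \emptyset$, then $\sum_{h:\,C_h \subseteq H_i} \lambda_h^* > 0$ for every $i$, so normalising yields a conditional probability reproducing every $\mu_i$, and a direct gain computation rules out any Dutch book; hence $\M$ is coherent.

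The substantive case is $I_0 \neq \emptyset$, and this is where I expect the main obstacle to lie. By condition (ii) and the inductive hypothesis applied to the strictly smaller family $\F_0$, there is a witness $\Lambda^{(0)}$ certifying coherence of $\M_0$ on its own constituent system. The plan is to merge $\Lambda^{(0)}$ with $\Lambda^*$ via the convex combination $\Lambda_\epsilon = (1-\epsilon)\Lambda^* + \epsilon \Lambda^{(0)}$, after suitably embedding $\Lambda^{(0)}$ on the full constituent system of $\F$. For $\epsilon > 0$ small enough this should lift every $H_i$ with $i \in I_0$ out of zero probability, while leaving the already-satisfied constraints for $i \notin I_0$ undisturbed---precisely because, by the very definition of $I_0$, the extra mass is placed on constituents where those $H_i$ are false or void. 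The delicate point, and the place where I expect the most care to be needed, is showing that no finite combination of bets against $\M$ can guarantee a negative gain: I would split any candidate strategy into a component detectable on $\bigvee_{i \notin I_0} H_i$ (excluded by $\Lambda^*$) and a residual component supported on $I_0$-conditioning events (excluded by coherence of $\M_0$ via $\Lambda^{(0)}$), and argue that a simultaneous sure loss on both parts is impossible.
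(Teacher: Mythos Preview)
The paper does not contain a proof of this theorem: it is stated as a preliminary result and attributed to \cite[Theorem~3]{BiGS08}, so there is no in-paper argument to compare your proposal against.

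That said, your outline follows the standard structure for this kind of result, and the ``only if'' direction is fine as sketched. The ``if'' direction is where some care is needed. Your merging step $\Lambda_\epsilon = (1-\epsilon)\Lambda^* + \epsilon \Lambda^{(0)}$ is not quite the right mechanism: $\Lambda^{(0)}$ lives on the constituent system of $\F_0$, whose atoms are unions of constituents of $\F$, and when you push $\Lambda^{(0)}$ down to the finer partition there is no reason the extra mass should avoid the events $H_i$ with $i \notin I_0$; your parenthetical justification (``by the very definition of $I_0$ the extra mass is placed on constituents where those $H_i$ are false or void'') is not correct. The usual argument in the cited reference does not try to build a single global solution at all. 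Instead it works directly with gains: from $\M \in \I$ one gets $\sum_h \lambda_h G_h = 0$ for every $\Lambda \in S$, so any candidate Dutch book must have $G_h = 0$ on every $C_h$ that receives positive mass from some $\Lambda \in S$, which forces the bet to be effectively supported on the conditioning events indexed by $I_0$; one then invokes coherence of $\M_0$ (and the inductive hypothesis) to rule it out. Your final paragraph gestures at exactly this splitting, so the overall plan is sound---just drop the convex-combination construction and argue at the level of gains.
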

Coherence can be related to  proper scoring rules (\cite{BiGS12,GiSa11a,LSA12,LSA15,LaSA18}).
\begin{definition}\label{CONJUNCTION}Given any pair of conditional events $A|H$ and $B|K$, with $P(A|H)=x$ and $P(B|K)=y$, their conjunction
		is the conditional random quantity $(A|H)\wedge(B|K)$, with $\prev[(A|H)\wedge(B|K)]=z$,  defined as
\begin{equation}\label{EQ:CONJUNCTION}
(A|H)\wedge(B|K) =\left\{\begin{array}{ll}
1, &\mbox{if $AHBK$ is true,}\\
0, &\mbox{if $\widebar{A}H\vee \widebar{B}K$ is true,}\\
x, &\mbox{if $\widebar{H}BK$ is true,}\\
y, &\mbox{if $AH\widebar{K}$ is true,}\\
z, &\mbox{if $\widebar{H}\,\widebar{K}$ is true}.
\end{array}
\right.
\end{equation}
\end{definition}
In betting terms, the prevision $z$ represents the amount you agree to pay, with the proviso that you will receive the quantity $(A|H)\wedge(B|K)$.
Different approaches to compounded conditionals, not based on coherence,  have been developed by other authors  (see, e.g., \cite{Kauf09,mcgee89}).
We  recall a result  which shows that Fr\'echet-Hoeffding bounds still hold for the conjunction of conditional events (\cite[Theorem~7]{GiSa14}).
\begin{theorem}\label{THM:FRECHET}{\rm
		Given any coherent assessment $(x,y)$ on $\{A|H, B|K\}$, with $A,H,B$, $K$ logically independent, $H\neq \emptyset, K\neq  \emptyset$, the extension $z = \mathbb{P}[(A|H) \wedge (B|K)]$ is coherent if and only if the following  Fr\'echet-Hoeffding bounds are satisfied:
		\begin{equation}\label{LOW-UPPER}
		\max\{x+y-1,0\} = z' \; \leq \; z \; \leq \; z'' = \min\{x,y\} \,.
		\end{equation}
}\end{theorem}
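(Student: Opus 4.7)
The plan is to apply the operative characterization of coherence (Theorem~\ref{CNES-PREV-I_0-INT}) to the family $\F=\{A|H,\,B|K,\,(A|H)\wedge(B|K)\}$ with prevision $\M=(x,y,z)$. Under the logical independence of $A,H,B,K$, the disjunction $H\vee K$ decomposes into the eight nonempty constituents $C_1=AHBK$, $C_2=AH\widebar{B}K$, $C_3=\widebar{A}HBK$, $C_4=\widebar{A}H\widebar{B}K$, $C_5=AH\widebar{K}$, $C_6=\widebar{A}H\widebar{K}$, $C_7=\widebar{H}BK$, $C_8=\widebar{H}\widebar{B}K$, while $C_0=\widebar{H}\widebar{K}$ is also nonempty. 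Reading the values of $(A|H,\,B|K,\,(A|H)\wedge(B|K))$ on each $C_h$ from Definition~\ref{CONJUNCTION} yields $Q_1=(1,1,1)$, $Q_2=(1,0,0)$, $Q_3=(0,1,0)$, $Q_4=(0,0,0)$, $Q_5=(1,y,y)$, $Q_6=(0,y,0)$, $Q_7=(x,1,x)$, $Q_8=(x,0,0)$, and $Q_0=\M$. Part~(i) of the theorem then reduces coherence of $\M$ to the membership $\M\in\I=\mathrm{conv}\{Q_1,\dots,Q_8\}$, i.e., to the solvability of system~$(\Sigma)$ in weights $\lambda_1,\dots,\lambda_8\geq 0$ with $\sum_{h=1}^{8}\lambda_h=1$.

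For the necessity direction I would suppose a solution exists and exploit the third equation $z=\lambda_1+y\lambda_5+x\lambda_7$. Using $y\lambda_5\leq\lambda_5$ together with the first equation $\lambda_1+\lambda_2+\lambda_5+x\lambda_7+x\lambda_8=x$ gives $z\leq x-\lambda_2-x\lambda_8\leq x$, and a symmetric manipulation based on the second equation yields $z\leq y$, hence $z\leq\min\{x,y\}$. Nonnegativity of the three summands in $z=\lambda_1+y\lambda_5+x\lambda_7$ gives $z\geq 0$. Finally, adding the first two equations, subtracting the normalization $\sum_h\lambda_h=1$ and rearranging produces the identity $x+y-1=z-\lambda_4+(y-1)\lambda_6+(x-1)\lambda_8$, whose right-hand side is at most $z$ since $\lambda_4,\lambda_6,\lambda_8\geq 0$ and $x,y\leq 1$. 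This establishes the Fr\'echet-Hoeffding bounds.

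For sufficiency, for every admissible $z$ I would exhibit an explicit solution of $(\Sigma)$. At the upper extreme $z''=\min\{x,y\}$, assuming $x\leq y$, the choice $\lambda_1=x$, $\lambda_3=y-x$, $\lambda_4=1-y$ (other weights zero) works; the case $y\leq x$ is symmetric. At the lower extreme $z'=\max\{x+y-1,0\}$, when $x+y\geq 1$ the choice $\lambda_1=x+y-1$, $\lambda_2=1-y$, $\lambda_3=1-x$ works, and when $x+y<1$ the choice $\lambda_2=x$, $\lambda_3=y$, $\lambda_4=1-x-y$ works. Because $(\Sigma)$ is linear in $\Lambda$ and $z$, the convex combination $\alpha\Lambda''+(1-\alpha)\Lambda'$ solves it for every intermediate $z=\alpha z''+(1-\alpha)z'$, establishing condition~(i) of Theorem~\ref{CNES-PREV-I_0-INT}. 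For condition~(ii), as soon as $x,y\in(0,1)$ the constructed $\Lambda$ charges constituents inside both $H$ and $K$, so $I_0=\emptyset$ and (ii) is vacuous; in the degenerate boundary cases $x\in\{0,1\}$ or $y\in\{0,1\}$ the Fr\'echet interval collapses to a single point and the surviving sub-assessment $\M_0$ is a singleton value in $[0,1]$, which is trivially coherent.

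The main obstacle I foresee is this last piece of bookkeeping: arranging the explicit $\Lambda$'s so that they charge the appropriate constituents in every regime (including the corners where one of $x,y$ equals $0$ or $1$) and checking that condition~(ii) either holds vacuously or reduces to a clearly coherent sub-assessment. Everything else is either a direct computation in $(\Sigma)$ or a linearity argument.
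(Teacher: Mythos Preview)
The paper does not actually contain a proof of Theorem~\ref{THM:FRECHET}; it is merely \emph{recalled} from \cite[Theorem~7]{GiSa14}. So there is no in-paper argument to compare against. Your approach via Theorem~\ref{CNES-PREV-I_0-INT} and the eight-point convex hull is exactly the natural one in this setting, and it is the same machinery the paper uses for the analogous result Theorem~\ref{THM:A=B}.

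Your computations are correct. The necessity direction is clean: the three manipulations of $(\Sigma)$ you describe do give $z\leq x$, $z\leq y$, $z\geq 0$, and $z\geq x+y-1$. For sufficiency, your explicit solutions at the two endpoints check out, and the linear interpolation in $\alpha$ is valid because only the third equation of $(\Sigma)$ depends on $z$.

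One simplification you are missing removes the ``main obstacle'' you flag at the end. All four of your endpoint solutions $\Lambda',\Lambda''$ use only the weights $\lambda_1,\lambda_2,\lambda_3,\lambda_4$, and the corresponding constituents $C_1,C_2,C_3,C_4$ are all contained in $HK$. Hence for any convex combination $\Lambda=\alpha\Lambda''+(1-\alpha)\Lambda'$ one has
\[
\sum_{h:C_h\subseteq H}\lambda_h \;=\; \sum_{h:C_h\subseteq K}\lambda_h \;=\; \sum_{h:C_h\subseteq H\vee K}\lambda_h \;=\; 1,
\]
so $I_0=\emptyset$ for \emph{every} $(x,y)\in[0,1]^2$ and every admissible $z$. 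Condition~(ii) of Theorem~\ref{CNES-PREV-I_0-INT} is therefore vacuous throughout, and no separate boundary-case bookkeeping for $x\in\{0,1\}$ or $y\in\{0,1\}$ is needed.
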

\begin{remark}
From Theorem \ref{THM:FRECHET}, as the assessment $(x,y)$ on $\{A|H,B|K\}$ is coherent for every $(x,y)\in[0,1]^2$, the set $\Pi$ of coherent assessments $(x,y,z)$ on $\{A|H,B|K,(A|H)\wedge(B|K)\}$ is 
\begin{equation}\label{EQ:PI2}
\begin{small}
\Pi=\{(x,y,z): (x,y)\in[0,1]^2, \max\{x+y-1,0\}  \leq z\leq \min\{x,y\}
\end{small}
\}.
\end{equation}
The set $\Pi$ is the tetrahedron with vertices the points $(1,1,1)$, $(1,0,0)$, $(0,1,0)$, $(0,0,0)$. For other definition of conjunctions, where the conjunction is a conditional event, some results on lower and upper bounds have been given in \cite{SUM2018S}.
\end{remark}
\begin{definition}\label{DEF:CONGn}	Let be given  $n$ conditional events $E_1|H_1,\ldots,E_n|H_n$.
For each   subset $S$, with $\emptyset\neq S \subseteq \{1,\ldots,n\}$,  let $x_{S}$ be a  prevision assessment on $\bigwedge_{i\in S} (E_i|H_i)$.
The conjunction $\C_{1\cdots n}=(E_1|H_1) \wedge \cdots \wedge (E_n|H_n)$ is defined as
\begin{equation}\label{EQ:CF}
\begin{array}{lll}
\C_{1\cdots n}=
\left\{
\begin{array}{llll}
1, &\mbox{ if } \bigwedge_{i=1}^n E_iH_i, \mbox{ is true} \\
0, &\mbox{ if } \bigvee_{i=1}^n \widebar{E}_iH_i, \mbox{ is true}, \\
x_{S}, &\mbox{ if } \bigwedge_{i\in S} \widebar{H}_i\bigwedge_{i\notin S} E_i{H}_i\, \mbox{ is true}, \; \emptyset \neq S\subseteq \{1,2\ldots,n\}.
\end{array}
\right.
\end{array}
\end{equation}
\end{definition}
In particular, $\C_1=E_1|H_1$;
moreover, for $\S=\{i_1,\ldots,i_k\}\subseteq \{1,\ldots,n\}$, the conjunction $\bigwedge_{i\in S} (E_i|H_i)$ is denoted by $\C_{i_1\cdots i_k}$ and $x_{\S}$ is also denoted by $x_{i_1\cdots i_k}$.
Moreover, if $\S=\{i_1,\ldots,i_k\}\subseteq \{1,\ldots,n\}$, the conjunction $\bigwedge_{i\in S} (E_i|H_i)$ is denoted by $\C_{i_1\cdots i_k}$ and $x_{\S}$ is also denoted by $x_{i_1\cdots i_k}$. 
In the betting framework, you agree to pay $x_{1\cdots n}=\prev(	\C_{1\cdots n})$ with the proviso that you will receive: $1$, if all conditional events are true;  $0$, if at least one of the conditional events is false;  the prevision of the conjunction of that conditional events which are void,  otherwise. 
The operation of conjunction is associative and commutative. 
We observe that, based on  Definition \ref{DEF:CONGn}, when $n=3$ we obtain
\begin{equation}\label{EQ:CONJUNCTION3}
\begin{small}
\begin{array}{lll}
\C_{123}
=\left\{
\begin{array}{llll}
1, &\mbox{ if } E_1H_1E_2H_2E_3H_3 \mbox{ is true},\\
0, &\mbox{ if } \widebar{E}_1H_1 \vee \widebar{E}_2H_2 \vee \widebar{E}_3H_3 \mbox{ is true},\\
x_1,& \mbox{ if } \widebar{H}_1E_2H_2E_3H_3 \mbox{ is true},\\
x_2,& \mbox{ if } \widebar{H}_2E_1H_1E_3H_3 \mbox{ is true},\\
x_3, &\mbox{ if } \widebar{H}_3E_1H_1E_2H_2 \mbox{ is true}, \\
x_{12}, &\mbox{ if } \widebar{H}_1\widebar{H}_2E_3H_3 \mbox{ is true}, \\
x_{13}, &\mbox{ if } \widebar{H}_1\widebar{H}_3E_2H_2 \mbox{ is true}, \\
x_{23}, &\mbox{ if } \widebar{H}_2\widebar{H}_3E_1H_1 \mbox{ is true}, \\
x_{123}, &\mbox{ if } \widebar{H}_1\widebar{H}_2\widebar{H}_3 \mbox{ is true}. \\
\end{array}
\right.
\end{array}
\end{small}
\end{equation}
 We recall the following result (\cite[Theorem 15]{GiSa19}). 
\begin{theorem}\label{THM:PIFOR3}
Assume that  the events $E_1, E_2, E_3, H_1, H_2, H_3$  are logically independent, with $H_1\neq \emptyset, H_2\neq \emptyset, H_3\neq \emptyset$.
Then,	the set $\Pi$ of all coherent assessments  $\mathcal{M}=(x_1,x_2,x_3,x_{12},x_{13},x_{23},x_{123})$ on
$\F=\{\C_{1},\C_{2},\C_{3}, \C_{12}, \C_{13}, \C_{23}, \C_{123}\}$ is the set of points $(x_1,x_2,x_3,x_{12},x_{13},x_{23},x_{123})$ 
which satisfy the following  conditions
\begin{equation}
\small
\label{EQ:SYSTEMPISTATEMENT}
\left\{
\begin{array}{l}
(x_1,x_2,x_3)\in[0,1]^3,\\
\max\{x_1+x_2-1,x_{13}+x_{23}-x_3,0\}\leq x_{12}\leq \min\{x_1,x_2\},\\
\max\{x_1+x_3-1,x_{12}+x_{23}-x_2,0\}\leq x_{13}\leq \min\{x_1,x_3\},\\
\max\{x_2+x_3-1,x_{12}+x_{13}-x_1,0\}\leq x_{23}\leq \min\{x_2,x_3\},\\
1-x_1-x_2-x_3+x_{12}+x_{13}+x_{23}\geq 0,\\
x_{123}\geq \max\{0,x_{12}+x_{13}-x_1,x_{12}+x_{23}-x_2,x_{13}+x_{23}-x_3\},\\
x_{123}\leq  \min\{x_{12},x_{13},x_{23},1-x_1-x_2-x_3+x_{12}+x_{13}+x_{23}\}.
\end{array}
\right.
\end{equation}
\end{theorem}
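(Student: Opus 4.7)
The plan is to prove the theorem in two directions: necessity (every coherent assessment lies in the polyhedron) and sufficiency (every point of the polyhedron is coherent).

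For necessity, I would combine Theorem \ref{THM:FRECHET} with Bonferroni-style arguments. Applying Theorem \ref{THM:FRECHET} to each pair $(\C_i,\C_j)$ with $\{i,j\}\subset\{1,2,3\}$ immediately yields the box constraints $(x_1,x_2,x_3)\in[0,1]^3$ together with the pairwise Fr\'echet bounds $\max\{x_i+x_j-1,0\}\leq x_{ij}\leq\min\{x_i,x_j\}$. Then, using associativity of conjunction, applying Fr\'echet a second time to each pair $(\C_{ij},\C_k)$ (viewing $\C_{ij}$ as a conditional random quantity with prevision $x_{ij}$, and noting that $\C_{123}=\C_{ij}\wedge\C_k$) delivers both $x_{123}\leq\min\{x_{12},x_{13},x_{23}\}$ and the lower bounds $x_{123}\geq\max\{0,x_{ij}+x_{ik}-x_i\}$. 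The remaining linear inequalities, of the form $x_{ik}+x_{jk}-x_k\leq x_{ij}$ and $1-x_1-x_2-x_3+x_{12}+x_{13}+x_{23}\geq 0$, are more subtle; I would derive each by exhibiting a non-negative linear combination of the $\C_S$'s whose value on every constituent of $H_1\vee H_2\vee H_3$ is manifestly non-negative, and then taking previsions under a coherent extension.

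For sufficiency, I would apply Theorem \ref{CNES-PREV-I_0-INT} and exhibit an explicit solution of the associated system $(\Sigma)$. Since $E_1,E_2,E_3,H_1,H_2,H_3$ are logically independent, the Boolean algebra they generate has $2^6$ atoms, so $(\Sigma)$ has many degrees of freedom. My strategy is to construct masses on these atoms parametrically so as to realize all seven prevision values at once; the inequalities in (\ref{EQ:SYSTEMPISTATEMENT}) appear precisely as the constraints that keep the candidate masses non-negative. If the resulting distribution places positive total mass on each $H_i$, then $I_0=\emptyset$ and coherence follows from solvability of $(\Sigma)$. Otherwise $I_0\neq\emptyset$, and I would verify condition (ii) of Theorem \ref{CNES-PREV-I_0-INT} by restricting the inequalities in (\ref{EQ:SYSTEMPISTATEMENT}) to indices in $I_0$: the sub-system reduces either to a lower-dimensional instance of Theorem \ref{THM:PIFOR3} (handled inductively, or by the two-event case of Theorem \ref{THM:FRECHET}) or to the trivial single-event case.

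The main obstacle is the explicit parametric construction in the sufficiency direction. The polyhedron in $\mathbb{R}^7$ cut out by (\ref{EQ:SYSTEMPISTATEMENT}) has a rich facial structure, and at extreme points several inequalities become tight simultaneously, so the candidate weights $\lambda_h$ must be designed to remain non-negative uniformly across all faces. A further delicate point is the case $I_0\neq\emptyset$, where one must argue that the original inequalities descend cleanly to the subfamily $\F_0$ and admit a coherent extension. Producing a clean uniform formula for the $\lambda_h$'s in terms of $\M$, and organizing the case analysis for the degenerate boundary of the polyhedron, is the technically demanding part of the proof.
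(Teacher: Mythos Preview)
The paper does not actually prove Theorem~\ref{THM:PIFOR3}; it is merely recalled from \cite[Theorem~15]{GiSa19}, so there is no in-paper proof to compare against. Your overall two-step plan (necessity via linear inequalities among the $\C_S$, sufficiency via Theorem~\ref{CNES-PREV-I_0-INT}) is the natural one and is in the same spirit as the argument in \cite{GiSa19}, which works directly with the convex hull of the points $Q_h$ associated with the constituents.

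That said, your necessity sketch has a genuine gap. Theorem~\ref{THM:FRECHET} is stated for pairs of \emph{conditional events}, whereas $\C_{ij}$ is a conditional random quantity taking values in $\{0,1,x_i,x_j,x_{ij}\}$, not in $\{0,1,x_{ij}\}$. You therefore cannot simply ``apply Fr\'echet a second time to $(\C_{ij},\C_k)$''; the result is not available in that form. Moreover, even granting such an extension, Fr\'echet on $(\C_{ij},\C_k)$ would yield $x_{123}\ge\max\{x_{ij}+x_k-1,0\}$, not the bound $x_{123}\ge x_{ij}+x_{ik}-x_i$ that you claim. The latter is a genuinely trivariate Bonferroni inequality (from $\C_{ij}+\C_{ik}\le \C_i+\C_{ijk}$ on every constituent), and the inequality $1-x_1-x_2-x_3+x_{12}+x_{13}+x_{23}\ge 0$ likewise requires a separate constituent-level argument; neither drops out of iterated pairwise Fr\'echet. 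A clean way to organize necessity is to work directly with the points $Q_h$ of system~$(\Sigma)$: each inequality in (\ref{EQ:SYSTEMPISTATEMENT}) corresponds to an affine functional that is nonnegative on every $Q_h$, hence on their convex hull, hence on $\M$.

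For sufficiency your plan is sound but, as you acknowledge, the bookkeeping is where the real work lies. With $2^6-1$ constituents in $H_1\vee H_2\vee H_3$ and only seven equations, one can indeed choose $\Lambda$ so that (\ref{EQ:SYSTEMPISTATEMENT}) exactly expresses nonnegativity of the chosen components; the argument in \cite{GiSa19} does this by exhibiting explicit barycentric representations of $\M$ and then handling the boundary cases where some $\sum_{h:C_h\subseteq H_i}\lambda_h$ vanishes. Your inductive treatment of $I_0\neq\emptyset$ via Theorem~\ref{THM:FRECHET} or a lower-dimensional instance is the right idea, but you should be explicit that when $i\in I_0$ the remaining equalities force specific relations among the $x_S$ (e.g.\ $x_{ij}=x_ix_j$ in certain degenerate cases), and check that these are compatible with (\ref{EQ:SYSTEMPISTATEMENT}).
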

\begin{remark}\label{REM:INEQPI}
As shown in  (\ref{EQ:SYSTEMPISTATEMENT}), the  coherence of  $(x_1,x_2,x_3,x_{12},x_{13},x_{23},x_{123})$ amounts to  the condition
\begin{equation}\label{EQ:INEQPI}
\begin{array}{ll}
\max\{0,x_{12}+x_{13}-x_1,x_{12}+x_{23}-x_2,x_{13}+x_{23}-x_3\}\,\;\leq  \;x_{123}\;\leq \\
\leq\;\; 
\min\{x_{12},x_{13},x_{23},1-x_1-x_2-x_3+x_{12}+x_{13}+x_{23}\}.
\end{array}
\end{equation}
Then, in particular, 
	the extension $x_{123}$ on $\C_{123}$ is coherent if and only if $x_{123}\in[x_{123}',x_{123}'']$, where
 $x_{123}'=\max\{0,x_{12}+x_{13}-x_1,x_{12}+x_{23}-x_2,x_{13}+x_{23}-x_3\}$, $
x_{123}''= \min\{x_{12},x_{13},x_{23},1-x_1-x_2-x_3+x_{12}+x_{13}+x_{23}\}.$
\end{remark}
Then, by   Theorem \ref{THM:PIFOR3} it follows \cite[Corollary 1]{GiSa19}
\begin{corollary}\label{COR:PIFOR3}
For any coherent assessment  $(x_1,x_2,x_3,x_{12},x_{13},x_{23})$ on
$\{\C_{1},\C_{2},\C_{3}, \C_{12}, \C_{13}, \C_{23}\}$
the extension $x_{123}$ on $\C_{123}$ is coherent if and only if $x_{123}\in[x_{123}',x_{123}'']$, where
\begin{equation}\label{EQ:INECOR}
\begin{array}{ll}
x_{123}'=\max\{0,x_{12}+x_{13}-x_1,x_{12}+x_{23}-x_2,x_{13}+x_{23}-x_3\},\\
x_{123}''= \min\{x_{12},x_{13},x_{23},1-x_1-x_2-x_3+x_{12}+x_{13}+x_{23}\}.
\end{array}
\end{equation}
\end{corollary}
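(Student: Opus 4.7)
The plan is to obtain the corollary as a direct specialization of Theorem~\ref{THM:PIFOR3}. The characterization (\ref{EQ:SYSTEMPISTATEMENT}) splits cleanly into two blocks: the first five conditions depend only on $(x_1,x_2,x_3,x_{12},x_{13},x_{23})$, and the last two confine $x_{123}$ to the interval $[x_{123}',x_{123}'']$ identified in Remark~\ref{REM:INEQPI}.

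First I would check that coherence of $(x_1,x_2,x_3,x_{12},x_{13},x_{23})$ on the sub-family $\{\C_{1},\C_{2},\C_{3},\C_{12},\C_{13},\C_{23}\}$ already forces the first five conditions of (\ref{EQ:SYSTEMPISTATEMENT}). By the fundamental theorem of coherence, every coherent assessment admits at least one coherent extension; hence there exists some value $\tilde{x}_{123}$ such that the full 7-tuple $(x_1,x_2,x_3,x_{12},x_{13},x_{23},\tilde{x}_{123})$ is coherent on $\F=\{\C_{1},\C_{2},\C_{3},\C_{12},\C_{13},\C_{23},\C_{123}\}$. Applying Theorem~\ref{THM:PIFOR3} to this extension, all seven conditions in (\ref{EQ:SYSTEMPISTATEMENT}) hold; in particular, the five that do not mention $x_{123}$ hold.

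Having secured the first five conditions, the coherence of a proposed extension $x_{123}$ is, by Theorem~\ref{THM:PIFOR3} again, equivalent to the two remaining inequalities, which Remark~\ref{REM:INEQPI} rewrites precisely as $x_{123}' \leq x_{123} \leq x_{123}''$ with the bounds (\ref{EQ:INECOR}). This yields both directions: if $x_{123}\in[x_{123}',x_{123}'']$ then all seven conditions hold and the extension is coherent; conversely, coherence of the extension forces these inequalities.

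The main (rather mild) obstacle lies in the first step: one must appeal to the fundamental theorem of coherence to turn the hypothesis ``the 6-tuple is coherent'' into the structural inequalities of (\ref{EQ:SYSTEMPISTATEMENT}). Note that nonemptiness of $[x_{123}',x_{123}'']$ then comes for free from the witness $\tilde{x}_{123}$, so no separate verification that $x_{123}'\leq x_{123}''$ is needed.
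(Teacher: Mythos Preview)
Your proposal is correct and matches the paper's approach: the paper presents the corollary as an immediate consequence of Theorem~\ref{THM:PIFOR3} (via Remark~\ref{REM:INEQPI}), without giving a detailed proof. Your explicit invocation of the fundamental theorem of coherence to produce a witness $\tilde{x}_{123}$, thereby securing the first five conditions of (\ref{EQ:SYSTEMPISTATEMENT}), is precisely the routine step that makes this deduction rigorous.
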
	
We recall that in case of logical dependencies, the set of all coherent assessments may be smaller than that one associated with  the  case of logical independence. 
However (see \cite[Theorem 16]{GiSa19}) the set of coherent assessments is the same when  $H_1=H_2=H_3=H$ (where possibly $H=\Omega$; see also \cite[p. 232]{Joe97}) and 
a corollary similar to Corollary \ref{COR:PIFOR3} also holds in this case. For a similar result  based on copulas see \cite{Dura08}.
\section{Representation by Frank t-norms for  $(A|H)\wedge(B|K)$}
We recall that for every $\lambda \in[0,+\infty]$ the Frank t-norm $T_{\lambda}:[0,1]^2\rightarrow [0,1]$ with parameter $\lambda$ is defined as
\begin{equation}\label{EQ:FRANK}
\begin{small}
T_{\lambda}(u,v)=\left\{\begin{array}{ll}
T_{M}(u,v)=\min\{u,v\}, & \text{ if } \lambda=0,\\
T_{P}(u,v)=uv, & \text{ if } \lambda=1,\\	  
T_{L}(u,v)=\max\{u+v-1,0\}, & \text{ if } \lambda=+\infty,\\	  	
\log_{\lambda}(1+\frac{(\lambda^u-1)(\lambda^v-1)}{\lambda-1}), & \text{ otherwise}.
\end{array}\right.
\end{small}
\end{equation}
We recall that  $T_{\lambda}$ is continuous   with respect to $\lambda$;  
moreover, for every  $\lambda \in[0,+\infty]$, it holds that $T_{L}(u,v)\leq T_{\lambda}(u,v) \leq T_{M}(u,v)$, for every $(u,v)\in[0,1]^2$ (see, e.g., \cite{KlMP00},\cite{KlMe05}). In the next result we study the relation between our notion of conjunction and t-norms.
\begin{theorem}\label{THM_TNORM2}
Let us consider the conjunction $(A|H)\wedge(B|K)$, with $A, B, H, K$ logically independent and with $P(A|H)=x$, $P(B|K)=y$.
Moreover, given any $\lambda  \in[0,+\infty]$, let $T_{\lambda}$ be the  Frank t-norm with parameter $\lambda$. Then,  the assessment
 $z=T_{\lambda}(x,y)$ on $(A|H)\wedge(B|K)$ is a coherent extension of $(x,y)$ on $\{A|H,B|K\}$; moreover   $(A|H)\wedge(B|K)=T_{\lambda}(A|H, B|K)$. Conversely, given any coherent extension  $z=\prev[(A|H)\wedge(B|K)]$ of $(x,y)$, there exists $\lambda \in[0,+\infty]$ such that $z=T_{\lambda}(x,y)$.
\end{theorem}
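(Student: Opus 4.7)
The plan is to ground the entire argument in the Fr\'echet--Hoeffding characterization from Theorem \ref{THM:FRECHET}: under logical independence, the coherent extensions $z$ of $(x,y)$ on $\{A|H, B|K\}$ are precisely the points in $[T_L(x,y),\,T_M(x,y)]$. Both halves of the theorem will follow once this interval is identified with the image of $\lambda \mapsto T_\lambda(x,y)$.

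For the forward direction, I would first note that every Frank t-norm satisfies $T_L(u,v) \leq T_\lambda(u,v) \leq T_M(u,v)$, as recalled immediately before the theorem. Hence $z = T_\lambda(x,y)$ automatically lies in the Fr\'echet--Hoeffding interval, so by Theorem \ref{THM:FRECHET} it is a coherent extension. To establish the random-quantity identity $(A|H)\wedge(B|K) = T_\lambda(A|H, B|K)$, I would verify it constituent by constituent, exploiting the universal properties shared by all Frank t-norms: $1$ is the neutral element ($T_\lambda(1,v)=v$, $T_\lambda(u,1)=u$) and $0$ is absorbing ($T_\lambda(0,v)=T_\lambda(u,0)=0$). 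Matching against Definition \ref{CONJUNCTION}: on $AHBK$ both sides equal $T_\lambda(1,1)=1$; on any constituent contained in $\widebar{A}H \vee \widebar{B}K$ both sides equal $0$; on $\widebar{H}BK$ the right-hand side is $T_\lambda(x,1)=x$; on $AH\widebar{K}$ it is $T_\lambda(1,y)=y$; and on $\widebar{H}\,\widebar{K}$ it is $T_\lambda(x,y)=z$. These exhaust the defining cases in (\ref{EQ:CONJUNCTION}), giving the identity.

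For the converse, I would use that $\lambda \mapsto T_\lambda(x,y)$ is continuous on the compactified interval $[0,+\infty]$, with endpoint values $T_0(x,y)=T_M(x,y)=\min\{x,y\}$ and $T_{+\infty}(x,y)=T_L(x,y)=\max\{x+y-1,0\}$. Given any coherent $z$, Theorem \ref{THM:FRECHET} yields $T_L(x,y)\leq z\leq T_M(x,y)$, so by the intermediate value theorem there exists $\lambda\in[0,+\infty]$ with $T_\lambda(x,y)=z$.

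The main obstacle I expect is not a deep inequality but the conceptual bookkeeping: one must treat $T_\lambda(A|H, B|K)$ as a \emph{genuine} conditional random quantity obtained by applying $T_\lambda$ pointwise to the numerical values of the two conditional events on each constituent, and then check pointwise agreement with the piecewise definition (\ref{EQ:CONJUNCTION}). Once this dictionary is in place, both directions reduce to standard facts about Frank t-norms (boundary ordering and continuity in $\lambda$) together with Theorem \ref{THM:FRECHET}. A minor edge case to mention is when $(x,y)$ lies on the boundary of $[0,1]^2$: there $T_M(x,y)=T_L(x,y)$, the interval degenerates, and the theorem is trivial.
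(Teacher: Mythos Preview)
Your proposal is correct and follows essentially the same route as the paper's proof: both invoke Theorem~\ref{THM:FRECHET} together with the bounds $T_L\leq T_\lambda\leq T_M$ for the forward coherence claim, verify the random-quantity identity constituent by constituent via the neutral/absorbing behavior of $T_\lambda$ at $0$ and $1$, and use continuity of $\lambda\mapsto T_\lambda(x,y)$ for the converse. The paper treats the endpoint values $z=\min\{x,y\}$ and $z=\max\{x+y-1,0\}$ explicitly before appealing to continuity on the open interval, whereas you package all of it into a single intermediate-value argument on the compactified interval, but this is only a cosmetic difference.
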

\begin{proof}
We observe that from Theorem \ref{THM:FRECHET},
for any  given  $\lambda$, the assessment $z=T_{\lambda}(x,y)$ is a coherent extension of $(x,y)$  on $\{A|H,B|K\}$. Moreover,
from (\ref{EQ:FRANK}) it holds that $T_{\lambda}(1,1)=1$,  $T_{\lambda}(u,0)=T_{\lambda}(0,v)=0$, $T_{\lambda}(u,1)=u$,   $T_{\lambda}(1,v)=v$. Hence,
\begin{equation}\label{EQ:FRANKBIS}
\begin{small}
		T_{\lambda}(A|H,B|K) =\left\{\begin{array}{ll}
	1, &\mbox{ if $AHBK$ is true,}\\
	0, &\mbox{ if  $\widebar{A}H$ is true or  $\widebar{B}K$ is true,}\\
	x, &\mbox{ if $\widebar{H}BK$ is true,}\\
	y, &\mbox{ if $\widebar{K}AH$ is true,}\\
	T_{\lambda}(x,y), &\mbox{ if $\widebar{H}\,\widebar{K}$ is true},
\end{array}
\right.
\end{small}	
\end{equation}
and, if we choose $z=T_{\lambda}(x,y)$, 
 from (\ref{EQ:CONJUNCTION}) and (\ref{EQ:FRANKBIS})   it follows that $(A|H)\wedge(B|K)=T_{\lambda}(A|H, B|K)$. \\ 
Conversely, given any coherent extension $z$ of $(x,y)$, there exists $\lambda$ such that $z=T_{\lambda}(x,y)$. Indeed, if $z=\min\{x,y\}$, then $\lambda=0$; if $z=\max\{x+y-1,0\}$, then $\lambda=+\infty$;
if $\max\{x+y-1,0\}<z<\min\{x,y\}
$, then by  continuity of $T_{\lambda}$   with respect to $\lambda$ 
it holds that
$z=T_{\lambda}(x,y)$ 
for some  $\lambda \in\,]0,\infty[$  (for instance, if $z=xy$, then $z=T_1(x,y)$) and hence $(A|H)\wedge(B|K)=T_{\lambda}(A|H, B|K)$.
\qed \end{proof}
\begin{remark}
As we can see from (\ref{EQ:CONJUNCTION}) and Theorem \ref{THM_TNORM2}, in case of logically independent events, if the assessed values $x,y,z$ are such that $z=T_{\lambda}(x,y)$ for a given $\lambda$, then the conjunction $(A|H)\wedge (B|K)=T_{\lambda}(A|H,B|K)$. For instance, if $z=T_1(x,y)=xy$, then 
$(A|H)\wedge (B|K)=T_{1}(A|H,B|K)=(A|H)\cdot (B|K)$. Conversely, if $(A|H)\wedge (B|K)=T_{\lambda}(A|H,B|K)$ for a given $\lambda$, then $z=T_{\lambda}(x,y)$.
Then, the set $\Pi$ given in (\ref{EQ:PI2}) can be written as
$\Pi=\{(x,y,z): (x,y)\in[0,1]^2, z=T_{\lambda}(x,y), \lambda \in [0,+\infty]\}.$
\end{remark}
\section{Conjunction of $(A|H)$ and  $(A|K)$}
In this section we examine the conjunction of two conditional events in the particular case when $A=B$, that is $(A|H)\wedge (A|K)$.
By setting $P(A|H)=x$, $P(A|K)=y$ and $\prev[(A|H)\wedge (A|K)]=z$, it holds that 
\[
(A|H)\wedge (A|K)=AHK+x\widebar{H}AK+y\widebar{K}AH+z\widebar{H}\,\widebar{K}\in\{1,0,x,y,z\}.
\]
\begin{theorem}\label{THM:A=B}
	Let $A, H, K$ be three logically independent  events, with $H\neq \emptyset$, $K\neq \emptyset$. The set $\Pi$ of all coherent assessments $(x,y,z)$ on the family  $\F=\{A|H,A|K,(A|H)\wedge (A|K)\}$ is given by 
	\begin{equation}\label{EQ:PIA=B}
	\Pi=\{(x,y,z): (x,y)\in[0,1]^2,  T_P(x,y)= xy\leq  z\leq \min\{x,y\}=T_{M}(x,y)\}.
	\end{equation}
\end{theorem}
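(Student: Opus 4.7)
The plan is to apply the operative characterization of coherence (Theorem \ref{CNES-PREV-I_0-INT}) directly to $\F=\{A|H,A|K,(A|H)\wedge (A|K)\}$. I first enumerate the six constituents contained in $H\vee K$, namely $C_1=AHK$, $C_2=\widebar{A}HK$, $C_3=AH\widebar{K}$, $C_4=\widebar{A}H\widebar{K}$, $C_5=\widebar{H}AK$, $C_6=\widebar{H}\widebar{A}K$ (together with $C_0=\widebar{H}\,\widebar{K}$), and read off from \eqref{EQ:CONJUNCTION} the associated points $Q_1=(1,1,1)$, $Q_2=(0,0,0)$, $Q_3=(1,y,y)$, $Q_4=(0,y,0)$, $Q_5=(x,1,x)$, $Q_6=(x,0,0)$, together with $\M=(x,y,z)$. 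Introducing $p=\lambda_1+\lambda_2+\lambda_3+\lambda_4$, $q=\lambda_1+\lambda_2+\lambda_5+\lambda_6$ and $r=\lambda_1+\lambda_2$ (so that $p+q-r=1$), the system $(\Sigma)$ collapses to
\[
\lambda_1+\lambda_3=xp,\quad \lambda_1+\lambda_5=yq,\quad \lambda_1+y\lambda_3+x\lambda_5=z,
\]
and eliminating $\lambda_3,\lambda_5$ from the third equation using the first two produces the key identity
\[
z=xy+\lambda_1(1-x-y)+xy\,r,\qquad 0\leq\lambda_1\leq r\leq\min(p,q).
\]

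From this identity both inclusions of \eqref{EQ:PIA=B} follow transparently. For the upper bound I estimate $z=\lambda_1+y\lambda_3+x\lambda_5\leq(\lambda_1+\lambda_3)+x\lambda_5=xp+x\lambda_5$, and $\lambda_5\leq q-r=1-p$ then gives $z\leq x$; by symmetry $z\leq y$, hence $z\leq T_M(x,y)$. For the lower bound I split on the sign of $1-x-y$: when $x+y\leq 1$ every term of the identity is non-negative and $z\geq xy$ is immediate; when $x+y>1$, combining $\lambda_1\leq r$ with the factorization $1-x-y+xy=(1-x)(1-y)\geq 0$ yields $z-xy\geq r(1-x-y)+xy\,r=r(1-x)(1-y)\geq 0$. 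Thus in every case $z\geq T_P(x,y)=xy$.

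Conversely, to show every $(x,y,z)$ with $xy\leq z\leq\min(x,y)$ is coherent I exhibit explicit solutions of $(\Sigma)$ realizing both endpoints: the ``disjoint'' assignment $\lambda_1=\lambda_2=0$, $\lambda_3=x/2$, $\lambda_4=(1-x)/2$, $\lambda_5=y/2$, $\lambda_6=(1-y)/2$ has $r=0$ and produces $z=xy$; and (for $x\leq y$) the ``nested'' assignment $\lambda_3=\lambda_4=0$ with $r=(1-y)/(1-x)$, $q=1$ and the remaining $\lambda_h$'s determined by the equations gives $z=x=\min(x,y)$ (the case $y\leq x$ is symmetric). Since for fixed $(x,y)$ the feasible set of $\Lambda$ is a polytope on which $z$ is linear, every intermediate value is attained by a convex combination, so the coherent slice of $z$ is the full interval $[T_P(x,y),T_M(x,y)]$. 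Condition (ii) of Theorem~\ref{CNES-PREV-I_0-INT} is vacuous because in each exhibited solution $p,q>0$ (so $1,2\notin I_0$) and $\sum_h\lambda_h=1$ forces $3\notin I_0$; the boundary cases $x,y\in\{0,1\}$ are handled by the obvious constant assignments. The main technical step is the lower bound when $x+y>1$, where $\lambda_1(1-x-y)$ turns negative and one has to combine $\lambda_1\leq r$ with the factorization $(1-x)(1-y)$ to recover non-negativity; everything else is routine linear-algebra bookkeeping on $(\Sigma)$.
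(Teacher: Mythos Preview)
Your proof is correct and follows the same overall strategy as the paper's: enumerate the six constituents in $H\vee K$, form the system $(\Sigma)$, verify the endpoint values are realised, and check that $I_0=\emptyset$ so that Theorem~\ref{CNES-PREV-I_0-INT}(ii) is vacuous. The explicit solutions you exhibit differ from the paper's but are equally valid, and your use of convexity to fill in the intermediate values of $z$ is cleaner than the paper's case-by-case endpoint verification.

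The one genuinely different ingredient is the lower bound $z\geq xy$. The paper argues geometrically: it identifies the plane $yX+xY-Z=xy$ through $Q_3,Q_4,Q_5,Q_6$, checks that $Q_1,Q_2$ lie on the side $yX+xY-Z\leq xy$, and concludes that every point of the convex hull satisfies $2xy-z\leq xy$, i.e.\ $z\geq xy$. You instead derive the algebraic identity $z=xy+\lambda_1(1-x-y)+xy\,r$ and then split on the sign of $1-x-y$, using $\lambda_1\leq r$ together with the factorisation $1-x-y+xy=(1-x)(1-y)$ in the harder case. Your identity is really the same hyperplane in disguise (the linear functional $yX+xY-Z$ evaluated at $\M$ equals $2xy-z$, and your identity rewrites this as $xy-\lambda_1(1-x-y)-xy\,r$), but your treatment makes the role of the constraint $\lambda_2\geq 0$ explicit and avoids any appeal to a picture. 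A minor point: your claim ``in each exhibited solution $p,q>0$'' fails for the nested solution when $y=1$ (then $p=r=0$), but in that boundary case the interval $[xy,\min\{x,y\}]$ collapses to $\{x\}$ and your disjoint solution with $p=q=1/2$ already covers it, so the argument survives.
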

\begin{proof}
	Let $\M=(x,y,z)$ be a prevision assessment on $\F$.
	The constituents associated with the pair $(\F,\M)$ and contained in $H \vee K$ are:
	$ C_1=AHK$,     $C_2=\widebar{A}HK$, $C_3=\widebar{A}\widebar{H}K$, $C_4=\widebar{A}H\widebar{K}$,
	$C_5=A\widebar{H}K$,  $C_6=AH\widebar{K}$.
	The associated points $Q_h$'s are $Q_1=(1,1,1), Q_2=(0,0,0), Q_3=(x,0,0), Q_4=(0,y,0), Q_5=(x,1,x), Q_6=(1,y,y)$. With the further constituent $C_0=\widebar{H}\widebar{K}$ it is associated the point $Q_0=\mathcal{M}=(x,y,z)$. 		
	Considering the convex hull $\I$ (see Figure \ref{FIG:IEA1}) of $Q_1, \ldots, Q_6$, a necessary condition for the coherence of the prevision assessment $\M=(x,y,z)$ on $\F$ is that $\M \in \I$, that is the following system  must be solvable
	\[
	(\Sigma) \left\{
	\begin{array}{l}
	\lambda_1+x\lambda_3+x\lambda_5+\lambda_6=x,\;\;
	\lambda_1+y\lambda_4+\lambda_5+y\lambda_6=y,\;\;
	\lambda_1+x\lambda_5+y\lambda_6=z,\\
	\sum_{h=1}^6\lambda_h=1,\;\;
	\lambda_h\geq 0,\; h=1,\ldots,6.
	\end{array}
	\right.
	\]
	First of all, we observe that solvability of $(\Sigma)$ requires that $z\leq x$ and $z\leq y$, that is $z\leq \min\{x,y\}$. We now verify that $(x,y,z)$, with $(x,y)\in[0,1]^2$ and $z=\min\{x,y\}$, is  coherent. We distinguish two cases: $(i)$ $x\leq y$ and $(ii)$ $x> y$. \\		
	Case $(i)$. 
	In this case $z=\min\{x,y\}=x$. If $y=0$ 
	the system $(\Sigma)$ becomes
	\[ 
	\begin{array}{l}
	\lambda_1+\lambda_6=0,\;\;
	\lambda_1+\lambda_5=0,\;\;
	\lambda_1=0,\;
	\lambda_2+\lambda_3+\lambda_4=1,\;\;
	\lambda_h\geq 0,\;\; h=1,\ldots,6.
	\end{array}
	\]
	which is clearly solvable. In particular there exist solutions with $\lambda_2>0,\lambda_3>0, \lambda_4>0$, by 
	Theorem \ref{CNES-PREV-I_0-INT},
	as the set $I_0$ is empty the solvability of $(\Sigma)$ is sufficient for coherence of the assessment $(0,0,0)$. 
	If $y>0$ the system $(\Sigma)$ is solvable and a solution is $	\Lambda=(\lambda_1,\ldots, \lambda_6)=(x,\frac{x(1-y)}{y},0,\frac{y-x}{y},0,0)$.
	We observe that, if $x>0$, then $\lambda_1>0$ and $I_0=\emptyset$ because $\C_1=HK\subseteq H\vee K$, so that $\M=(x,y,x)$ is coherent. If $x=0$ (and hence $z=0$), then $\lambda_4=1$ and $I_0\subseteq \{2\}$. Then, as the sub-assessment $P(A|K)=y$ is coherent, it follows that the assessment $\M=(0,y,0)$ is coherent too.\\
	Case $(ii)$. The system is solvable and a solution is $
	\Lambda=(\lambda_1,\ldots, \lambda_6)=(y,\frac{y(1-x)}{x},\frac{x-y}{x},0,0,0).$
	We observe that, if $y>0$, then $\lambda_1>0$ and $I_0=\emptyset$ because $\C_1=HK\subseteq H\vee K$, so that $\M=(x,y,y)$ is coherent. If $y=0$ (and hence $z=0$), then $\lambda_3=1$ and $I_0\subseteq \{1\}$. Then, as the sub-assessment $P(A|H)=x$ is coherent, it follows that the assessment $\M=(x,0,0)$ is coherent too.
	Thus, for every $(x,y)\in[0,1]^2$, the assessment $(x,y,\min\{x,y\})$ is coherent  and, as $z\leq \min\{x,y\}$,   the upper bound on $z$ is $\min\{x,y\}=T_M(x,y)$. \\
	We now verify that $(x,y,xy)$, with $(x,y)\in[0,1]^2$ is coherent; moreover we will show that  $(x,y,z)$, with $z<xy$, is not coherent, in other words the lower bound for $z$ is $xy$. 
	First of all, we observe that $\M=(1-x)Q_4+xQ_6$, so that a solution of $(\Sigma)$ is $\Lambda_1=(0,0,0,1-x,0,x)$.
	Moreover, $\M=(1-y)Q_3+yQ_5$, so that another solution is $\Lambda_2=(0,0,1-y,0,y,0)$. Then 
$
	\Lambda=\frac{\Lambda_1+\Lambda_2}{2}=(0,0,\frac{1-y}{2},\frac{1-x}{2},\frac{y}{2},\frac{x}{2})
$
	is a solution of $(\Sigma)$ such that $I_0=\emptyset$. Thus the assessment $(x,y,xy)$ is coherent for every $(x,y)\in[0,1]^2$. 
	In order to verify that  $xy$ is the lower bound on $z$ we observe that the points $Q_3,Q_4,Q_5,Q_6$ belong to a plane $\pi$ of equation: 
	$yX+xY-Z=xy$,	where $X,Y,Z$ are the axis' coordinates. Now, by considering the function $f(X,Y,Z)= yX+xY-Z$, we observe that for each constant $k$ the equation $f(X,Y,Z)=k$ represents a plane which is parallel to $\pi$ and coincides with $\pi$ when  $k=xy$. We also observe that 
	$f(Q_1)=f(1,1,1)=x+y-1=T_L(x,y)\leq xy=T_P(x,y)$,  
	$f(Q_2)=f(0,0,0)=0 \leq xy=T_P(x,y)$,  and
	$f(Q_3)=f(Q_4)=f(Q_5)=f(Q_6)= xy=T_P(x,y)$.
	Then, for every $\P=\sum_{h=1}^6\lambda_hQ_h$, with $\lambda_h\geq 0$ and $\sum_{h=1}^6\lambda_h=1$, that is $\P\in \I$, it holds that 
$
	f(\P)=f\big(\sum_{h=1}^6\lambda_hQ_h\big)=\sum_{h=1}^6\lambda_hf(Q_h)\leq  xy.
$
	On the other hand, given any $a>0$, by considering  $\P=(x,y,xy-a)$ it holds that 
$
	f(\P)=f(x,y,xy-a)=xy+xy-xy+a= xy+a>xy.
	$
	Therefore, for any given $a>0$ the assessment $(x,y,xy-a)$ is not coherent because $(x,y,xy-a)\notin \I$. Then, the lower bound on $z$ is $xy=T_{P}(x,y)$. Finally, the set of all coherent assessments $(x,y,z)$ on $\F$ is the set $\Pi$ in 
	(\ref{EQ:PIA=B}).
	\qed
\end{proof}
\begin{figure}[tpbh]
	\centering
	\vspace{-1cm}
	\includegraphics[width=0.70\linewidth]{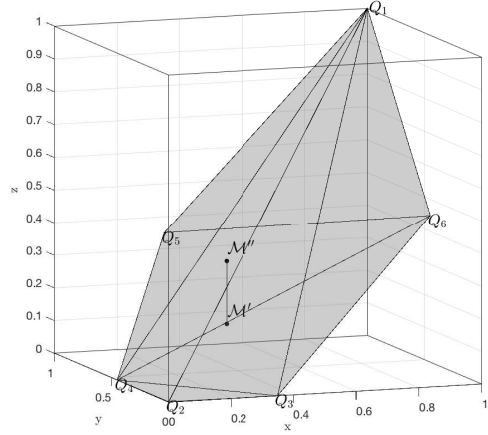}
	\vspace{-0.5cm}
	\caption{Convex hull $\I$ of  the points $Q_1, Q_2,Q_3, Q_4, Q_5,Q_6$.  
		$\M'=(x,y,z'), \M''=(x,y,z'')$, where $(x,y)\in[0,1]^2$, $z'=xy$, $z''=\min\{x,y\}$. In the figure the numerical  values are: $x=0.35$, $y=0.45$, $z'=0.1575$, and  $z''=0.35$.}
	\label{FIG:IEA1}
	\vspace{-0.5cm}
\end{figure}		
Based on Theorem \ref{THM:A=B}, we can give an analogous version for the Theorem \ref{THM_TNORM2} (when $A=B$).
\begin{theorem}\label{THM_TNORM2A=B}
	Let us consider the conjunction $(A|H)\wedge(A|K)$, with $A, H, K$ logically independent and with $P(A|H)=x$, $P(A|K)=y$.
	Moreover, given any $\lambda  \in[0,1]$, let $T_{\lambda}$ be the  Frank t-norm with parameter $\lambda$. Then,  the assessment
	$z=T_{\lambda}(x,y)$ on $(A|H)\wedge(A|K)$ is a coherent extension of $(x,y)$ on $\{A|H,A|K\}$; moreover   $(A|H)\wedge(A|K)=T_{\lambda}(A|H, A|K)$. Conversely, given any coherent extension  $z=\prev[(A|H)\wedge(A|K)]$ of $(x,y)$, there exists $\lambda \in[0,1]$ such that $z=T_{\lambda}(x,y)$.
\end{theorem}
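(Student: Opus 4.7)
The plan is to reduce to Theorem \ref{THM:A=B} and mimic the argument of Theorem \ref{THM_TNORM2}, the only real change being that in the case $A=B$ the coherent interval for $z$ shrinks from the full Fr\'echet--Hoeffding interval to $[xy,\min\{x,y\}]=[T_P(x,y),T_M(x,y)]$, which is exactly the image of the Frank family when $\lambda$ is restricted to $[0,1]$. Throughout, I would use the fact that for every $(u,v)\in[0,1]^2$ the map $\lambda\mapsto T_\lambda(u,v)$ is continuous and nonincreasing on $[0,+\infty]$, with $T_0=T_M$, $T_1=T_P$ and $T_{+\infty}=T_L$, a standard property of the Frank family (recalled just after (\ref{EQ:FRANK})).

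For the first (direct) implication, fix $\lambda\in[0,1]$. By the monotonicity of $T_\lambda$ in $\lambda$ one has $T_P(x,y)\leq T_\lambda(x,y)\leq T_M(x,y)$, so Theorem \ref{THM:A=B} yields that $z=T_\lambda(x,y)$ is a coherent extension of $(x,y)$ on $\{A|H,A|K\}$. The identity $(A|H)\wedge(A|K)=T_\lambda(A|H,A|K)$ is then checked constituent by constituent using the boundary conditions $T_\lambda(1,1)=1$, $T_\lambda(u,0)=T_\lambda(0,v)=0$, $T_\lambda(u,1)=u$, $T_\lambda(1,v)=v$: on $AHK$ both entries equal $1$; on $\overbar{A}H$ or $\overbar{A}K$ at least one entry equals $0$; on $\overbar{H}AK$ the entries are $(x,1)$, giving $x$; on $AH\overbar{K}$ they are $(1,y)$, giving $y$; and on $\overbar{H}\,\overbar{K}$ they are $(x,y)$, giving $T_\lambda(x,y)=z$. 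This matches the piecewise definition of $(A|H)\wedge(A|K)$ recalled at the start of the section.

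For the converse, take any coherent extension $z$ of $(x,y)$. Theorem \ref{THM:A=B} gives $xy\leq z\leq\min\{x,y\}$, i.e.\ $T_1(x,y)\leq z\leq T_0(x,y)$. The boundary cases are handled directly: $z=\min\{x,y\}$ corresponds to $\lambda=0$ and $z=xy$ to $\lambda=1$. For an intermediate value, continuity of $\lambda\mapsto T_\lambda(x,y)$ on $[0,1]$ together with the intermediate value theorem yields some $\lambda\in(0,1)$ with $T_\lambda(x,y)=z$. Applying the direct implication with this $\lambda$ then gives $(A|H)\wedge(A|K)=T_\lambda(A|H,A|K)$.

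The one slightly delicate point is the constituent-by-constituent verification in the middle paragraph, where one must observe that the definition (\ref{EQ:CONJUNCTION}) of $(A|H)\wedge(B|K)$ specialized to $B=A$ matches, term for term, the values returned by $T_\lambda$ applied to the three-valued quantities $A|H$ and $A|K$; this is exactly the same kind of case analysis as in (\ref{EQ:FRANKBIS}), so no new difficulty arises. Once that is in place, the restriction of the admissible parameter range to $\lambda\in[0,1]$ is forced solely by the tighter lower bound $z\geq xy$ supplied by Theorem \ref{THM:A=B}, which is the substantive ingredient separating this statement from Theorem \ref{THM_TNORM2}.
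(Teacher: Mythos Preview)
Your proposal is correct and matches the paper's intended approach: the paper gives no explicit proof of Theorem~\ref{THM_TNORM2A=B}, stating only that it is obtained from Theorem~\ref{THM:A=B} by an argument analogous to that of Theorem~\ref{THM_TNORM2}, which is precisely what you carry out. One small inaccuracy: the monotonicity of $\lambda\mapsto T_\lambda(u,v)$ that you invoke is not actually recalled after (\ref{EQ:FRANK}) (only continuity and the bounds $T_L\le T_\lambda\le T_M$ are), but it is a standard property of the Frank family and its use here is entirely appropriate.
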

The next result follows from Theorem \ref{THM:A=B} when $H$, $K$ are incompatible.
\begin{theorem}\label{THM:SETPROD}
	Let $A, H, K$ be three events, with  $A$  logically independent from both $H$ and $K$, with $H\neq \emptyset$, $K\neq \emptyset$, $HK=\emptyset$. The set $\Pi$ of all coherent assessments $(x,y,z)$ on the family  $\F=\{A|H,A|K,(A|H)\wedge (A|K)\}$ is given by $\Pi=\{(x,y,z): (x,y)\in[0,1]^2,  z= xy=T_P(x,y)\}.$
\end{theorem}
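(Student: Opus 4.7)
The plan is to mimic the strategy of Theorem~\ref{THM:A=B}, taking advantage of the fact that $HK=\emptyset$ drastically restricts the constituents, so that the planar bound which was only an upper estimate in the general case becomes an equality here.

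First, I set up the constituents of $(\F,\M)$ with $\M=(x,y,z)$ and $\F=\{A|H,A|K,(A|H)\wedge(A|K)\}$. Since $HK=\emptyset$, the constituents $C_1=AHK$ and $C_2=\widebar{A}HK$ from Theorem~\ref{THM:A=B} are impossible, while $\widebar{H}K=K$ and $H\widebar{K}=H$. Hence, the constituents contained in $H\vee K$ reduce to $C_3=\widebar{A}K$, $C_4=\widebar{A}H$, $C_5=AK$, $C_6=AH$, and the corresponding points are exactly $Q_3=(x,0,0)$, $Q_4=(0,y,0)$, $Q_5=(x,1,x)$, $Q_6=(1,y,y)$, as in Theorem~\ref{THM:A=B}. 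With $C_0=\widebar{H}\,\widebar{K}$ is still associated $Q_0=\M$.

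The key observation, already used in the proof of Theorem~\ref{THM:A=B}, is that the four points $Q_3,Q_4,Q_5,Q_6$ all lie on the plane $\pi:\ yX+xY-Z=xy$. Therefore the convex hull $\I$ of $\{Q_3,Q_4,Q_5,Q_6\}$ is entirely contained in $\pi$, and the necessary condition $\M\in\I$ for coherence forces $yx+xy-z=xy$, i.e.\ $z=xy=T_P(x,y)$. This gives the inclusion $\Pi\subseteq\{(x,y,xy):(x,y)\in[0,1]^2\}$.

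For the converse inclusion, I verify that $(x,y,xy)$ is coherent for every $(x,y)\in[0,1]^2$. The two decompositions $\M=(1-x)Q_4+xQ_6$ and $\M=(1-y)Q_3+yQ_5$ already show that $(x,y,xy)\in\I$, so the system $(\Sigma)$ is solvable; averaging them yields the solution $\Lambda=(0,0,\tfrac{1-y}{2},\tfrac{1-x}{2},\tfrac{y}{2},\tfrac{x}{2})$. Then I check that $I_0=\emptyset$: the constituents contained in $H$ are $C_4,C_6$ with $\lambda_4+\lambda_6=\tfrac12>0$; those contained in $K$ are $C_3,C_5$ with $\lambda_3+\lambda_5=\tfrac12>0$; and those contained in $H\vee K$ are $C_3,C_4,C_5,C_6$ with total mass $1$. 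Theorem~\ref{CNES-PREV-I_0-INT} thus yields coherence, completing the proof.

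The only real obstacle is conceptual rather than computational: one has to notice that the planar inequality $f(\P)\le xy$ used in Theorem~\ref{THM:A=B} becomes an equality precisely because the two ``off-plane'' extremal points $Q_1=(1,1,1)$ and $Q_2=(0,0,0)$ (which satisfied $f(Q_j)\le xy$ with possibly strict inequality) are removed by the hypothesis $HK=\emptyset$. Everything else is reuse of the computations in the proof of Theorem~\ref{THM:A=B}.
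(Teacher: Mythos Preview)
Your proof is correct and follows essentially the same approach as the paper: observe that $HK=\emptyset$ eliminates the constituents $C_1,C_2$, leaving only the four points $Q_3,\ldots,Q_6$ which all lie on the plane $\pi:\ yX+xY-Z=xy$, so that $\M\in\I$ forces $z=xy$; then reuse the averaged convex combination from the proof of Theorem~\ref{THM:A=B} to obtain $I_0=\emptyset$ and hence coherence. Your write-up is in fact more explicit than the paper's own (very terse) proof, which simply points back to Theorem~\ref{THM:A=B}; the only cosmetic slip is that you keep the six-tuple indexing $\Lambda=(0,0,\tfrac{1-y}{2},\tfrac{1-x}{2},\tfrac{y}{2},\tfrac{x}{2})$ even though $C_1,C_2$ no longer exist, but the meaning is clear.
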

\begin{proof}
	We observe that 
	\[
	\begin{small}
	(A|H)\wedge (A|K)=\left\{\begin{array}{ll}
	0, &\mbox{ if }  \widebar{A}\widebar{H}K \vee \widebar{A}H\widebar{K}   \mbox{ is true,}\\
	x, &\mbox{ if }  \widebar{H}AK \mbox{ is true,}\\
	y, &\mbox{ if }  AH\widebar{K}\mbox{ is true,}\\
	z, &\mbox{ if }  \widebar{H}\widebar{K} \mbox{ is true.}\\
	\end{array}
	\right.
	\end{small}
	\]
Moreover, as $HK=\emptyset$,  
the points $Q_h$'s are $(x,0,0), (0,y,0), (x,1,x), (1,y,y)$, which coincide with the points $Q_3,\ldots, Q_6$ of the case $HK\neq \emptyset$. Then,  as shown in the proof of Theorem \ref{THM:A=B}, the condition  $\M=(x,y,z)$ belongs to the convex hull  of  $(x,0,0), (0,y,0), (x,1,x), (1,y,y)$ amounts to the condition $z=xy$.
\qed
\end{proof}	
\begin{remark}
From Theorem \ref{THM:SETPROD}, when $HK=\emptyset$ it holds that 
$
(A|H)\wedge (A|K)=(A|H)\cdot (A|K)=T_P(A|H,A|K),
$
where $x=P(A|H)$ and $y=P(A|K)$.
\end{remark}	

\section{Further Results on Frank t-norms}
In this section we give some results which concern Frank t-norms and the family 
$\F=\{\C_{1},\C_{2},\C_{3}, \C_{12}, \C_{13}, \C_{23}, \C_{123}\}$. We recall that, given any t-norm $T(x_1,x_2)$ it holds that $T(x_1,x_2,x_3)=T(T(x_1,x_2),x_3)$.
\subsection{On the Product t-norm}
\begin{theorem}\label{THM:PROD}
Assume that  the events $E_1, E_2, E_3, H_1, H_2, H_3$  are logically independent, with $H_1\neq \emptyset, H_2\neq \emptyset, H_3\neq \emptyset$.
If  the  assessment  $\mathcal{M}=(x_1,x_2,x_3,x_{12},x_{13},x_{23},x_{123})$ on
$\F=\{\C_{1},\C_{2},\C_{3}, \C_{12}, \C_{13}, \C_{23}, \C_{123}\}$ is such that $(x_1,x_2,x_3)\in[0,1]^3$, $x_{ij}=T_{1}(x_i,x_j)=x_ix_j$, $i\neq j$,  and $x_{123}=T_{1}(x_1,x_2,x_3)=x_1x_2x_3$, then
$\M$ is coherent. Moreover, $\mathcal{C}_{ij}=T_{1}(\mathcal{C}_i,\mathcal{C}_j)=\mathcal{C}_i\mathcal{C}_j$, $i\neq j$, and  $\mathcal{C}_{123}=T_{1}(\mathcal{C}_1,\mathcal{C}_2,\mathcal{C}_3)=\mathcal{C}_{1}\mathcal{C}_{2}\mathcal{C}_{3}$. 
\end{theorem}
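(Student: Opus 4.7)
My plan is to verify coherence by plugging the product assignments directly into the characterization given in Theorem~\ref{THM:PIFOR3}, and then deduce the structural identity $\mathcal{C}_{123}=\mathcal{C}_1\mathcal{C}_2\mathcal{C}_3$ from the case analysis in Definition~\ref{DEF:CONGn}.

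For the coherence part, I would substitute $x_{ij}=x_ix_j$ and $x_{123}=x_1x_2x_3$ into each of the inequalities listed in (\ref{EQ:SYSTEMPISTATEMENT}). The upper bounds of the form $x_ix_j\le\min\{x_i,x_j\}$ and $x_1x_2x_3\le\min\{x_ix_j\}$ are immediate since each $x_k\in[0,1]$. The Fr\'echet-type lower bounds such as $x_1+x_2-1\le x_1x_2$ reduce to $(1-x_1)(1-x_2)\ge 0$. The ``mixed'' bounds of the form $x_{13}+x_{23}-x_3=x_3(x_1+x_2-1)\le x_1x_2$ split into the case $x_1+x_2\le 1$ (where the left-hand side is non-positive) and $x_1+x_2>1$ (where $x_3(x_1+x_2-1)\le x_1+x_2-1\le x_1x_2$), and similarly for the three analogous inequalities on $x_{123}$ after factoring out $x_i$. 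For the remaining two inequalities, the key algebraic identity is
\begin{equation*}
1-x_1-x_2-x_3+x_1x_2+x_1x_3+x_2x_3 \;=\; (1-x_1)(1-x_2)(1-x_3)+x_1x_2x_3,
\end{equation*}
which immediately yields both the non-negativity of $1-x_1-x_2-x_3+x_{12}+x_{13}+x_{23}$ and the bound $x_1x_2x_3\le 1-x_1-x_2-x_3+x_{12}+x_{13}+x_{23}$, the latter reducing to $(1-x_1)(1-x_2)(1-x_3)\ge 0$.

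For the identity $\mathcal{C}_{ij}=\mathcal{C}_i\mathcal{C}_j$, I would invoke Theorem~\ref{THM_TNORM2} with $\lambda=1$, which gives exactly $(E_i|H_i)\wedge(E_j|H_j)=T_1(E_i|H_i,E_j|H_j)=(E_i|H_i)\cdot(E_j|H_j)$ when the prevision is $x_ix_j$. For $\mathcal{C}_{123}=\mathcal{C}_1\mathcal{C}_2\mathcal{C}_3$, I would compare the piecewise description (\ref{EQ:CONJUNCTION3}) with the value of $\mathcal{C}_1\mathcal{C}_2\mathcal{C}_3=(E_1|H_1)(E_2|H_2)(E_3|H_3)$ on each constituent: on $E_1H_1E_2H_2E_3H_3$ both equal $1$; on any $\widebar{E}_iH_i$ both equal $0$; on $\widebar{H}_iE_jH_jE_kH_k$ both equal $x_i$; on $\widebar{H}_i\widebar{H}_jE_kH_k$ both equal $x_ix_j=x_{ij}$; and on $\widebar{H}_1\widebar{H}_2\widebar{H}_3$ both equal $x_1x_2x_3=x_{123}$. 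Hence the two conditional random quantities coincide.

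The main non-trivial step is the block of inequalities involving $x_{123}$ and the mixed pairs (e.g.\ $x_{12}+x_{13}-x_1\le x_{123}$): these are the ones where one must exploit the full product structure rather than just $x_i\in[0,1]$, and I expect the cleanest route is the factorization trick $x_1x_2+x_1x_3-x_1=x_1(x_2+x_3-1)$ combined with the observation that $x_2+x_3-1\le x_2x_3$. Everything else is either a one-line monotonicity argument or follows from the expansion of $(1-x_1)(1-x_2)(1-x_3)$.
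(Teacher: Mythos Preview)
Your proposal is correct and follows essentially the same route as the paper: substitute the product assignments into the coherence characterization, use the factorizations $x_{ij}+x_{ik}-x_i=x_i(x_j+x_k-1)$ together with $x_j+x_k-1\le x_jx_k$, and invoke the identity $1-x_1-x_2-x_3+x_1x_2+x_1x_3+x_2x_3=(1-x_1)(1-x_2)(1-x_3)+x_1x_2x_3$; the structural identities are then read off from (\ref{EQ:CONJUNCTION}) and (\ref{EQ:CONJUNCTION3}). The only cosmetic difference is that the paper appeals to Remark~\ref{REM:INEQPI} to reduce the check to the single chain (\ref{EQ:INEQPI}), whereas you verify all of (\ref{EQ:SYSTEMPISTATEMENT}) directly---your version is slightly longer but contains the same computations.
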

\begin{proof}
From Remark \ref{REM:INEQPI}, the coherence of $\M$ amounts to the inequalities in (\ref{EQ:INEQPI}).
As $x_{ij}=T_{1}(x_i,x_j)=x_ix_j$, $i\neq j$,  and $x_{123}=T_{1}(x_1,x_2,x_3)=x_1x_2x_3$, 
the inequalities (\ref{EQ:INEQPI}) become
\begin{equation}
\begin{array}{ll}
\max\{0,x_1(x_2+x_3-1),x_{2}(x_1+x_3-1),x_3(x_1+x_2-1)\}\,\;\leq  \;x_{1}x_2x_3\;\leq \\
\leq\;\; 
\min\{x_{1}x_2,x_{1}x_3,x_{2}x_3,(1-x_1)(1-x_2)(1-x_3)+x_1x_2x_3\}.
\end{array}
\end{equation}
Thus, by recalling that $x_i+x_j-1\leq x_ix_j$, the inequalities are satisfied and hence $\M$ is coherent. Moreover, from (\ref{EQ:CONJUNCTION}) and (\ref{EQ:CONJUNCTION3}) it follows that 
$
\C_{ij}=T_{1}(\C_i,\C_j)=\C_i\C_j$,  $i\neq j$, and  $\C_{123}=T_{1}(\C_1,\C_2,\C_3)=\C_{1}\C_{2}\C_{3}$.\qed 
\end{proof}
\subsection{On the Minimum t-norm}
\begin{theorem}\label{THM:MIN}
	Assume that  the events $E_1, E_2, E_3, H_1, H_2, H_3$  are logically independent, with $H_1\neq \emptyset, H_2\neq \emptyset, H_3\neq \emptyset$.
	If  the  assessment  $\mathcal{M}=(x_1,x_2,x_3,x_{12},x_{13},x_{23},x_{123})$ on
	$\F=\{\C_{1},\C_{2},\C_{3}, \C_{12}, \C_{13}, \C_{23}, \C_{123}\}$ is such that $(x_1,x_2,x_3)\in[0,1]^3$, $x_{ij}=T_{M}(x_i,x_j)=\min\{x_i,x_j\}$, $i\neq j$,  and $x_{123}=T_{M}(x_1,x_2,x_3)=\min\{x_1,x_2,x_3\}$, then
	$\M$ is coherent. Moreover, $\C_{ij}=T_{M}(\mathcal{C}_i,\mathcal{C}_j)=\min\{\mathcal{C}_i,\mathcal{C}_j\}$, $i\neq j$, and  $\C_{123}=T_{M}(\mathcal{C}_1,\mathcal{C}_2,\mathcal{C}_3)=\min\{\mathcal{C}_{1},\mathcal{C}_{2},\mathcal{C}_{3}\}$. 
\end{theorem}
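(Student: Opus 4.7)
The plan is to mirror the argument of Theorem \ref{THM:PROD}: by Theorem \ref{THM:PIFOR3} and Remark \ref{REM:INEQPI}, coherence of $\mathcal{M}$ is equivalent to the system of inequalities (\ref{EQ:SYSTEMPISTATEMENT}), so after substituting $x_{ij}=\min\{x_i,x_j\}$ and $x_{123}=\min\{x_1,x_2,x_3\}$ the problem reduces to a purely symbolic verification. Because both the prescribed values and the system (\ref{EQ:SYSTEMPISTATEMENT}) are symmetric under permutations of the indices, I would without loss of generality assume $x_1 \leq x_2 \leq x_3$; then $x_{12}=x_{13}=x_{123}=x_1$ and $x_{23}=x_2$.

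Next I would check the three two-index inequality blocks in (\ref{EQ:SYSTEMPISTATEMENT}). For each pair, the upper bound $x_{ij} \leq \min\{x_i,x_j\}$ holds by construction, the Fr\'echet lower bound $x_i+x_j-1 \leq x_{ij}$ follows from $x_i,x_j \in [0,1]$, and the ``compensating'' lower bound of the form $x_{ik}+x_{jk}-x_k \leq x_{ij}$ collapses to an inequality which is immediate from $x_1 \leq x_2 \leq x_3$ (for $x_{12}$ it becomes $x_1+x_2-x_3 \leq x_1$; for $x_{13}$ it is $x_1 \leq x_1$; for $x_{23}$ it is $x_1 \leq x_2$). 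The auxiliary condition $1-x_1-x_2-x_3+x_{12}+x_{13}+x_{23} \geq 0$ reduces to $1-x_3+x_1 \geq 0$, which is trivial. For $x_{123}$ I would verify both envelopes in (\ref{EQ:INEQPI}): the four lower-bound terms are each $\leq x_1$ (again $x_{13}+x_{23}-x_3 = x_1+x_2-x_3 \leq x_1$ by $x_2 \leq x_3$), and the four upper-bound terms are each $\geq x_1$; the most delicate is $1-x_1-x_2-x_3+x_{12}+x_{13}+x_{23} = 1-x_3+x_1 \geq x_1$, which holds simply because $x_3 \leq 1$.

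For the structural identities $\mathcal{C}_{ij}=\min\{\mathcal{C}_i,\mathcal{C}_j\}$ and $\mathcal{C}_{123}=\min\{\mathcal{C}_1,\mathcal{C}_2,\mathcal{C}_3\}$, I would compare, constituent by constituent, the piecewise definitions (\ref{EQ:CONJUNCTION}) and (\ref{EQ:CONJUNCTION3}) with the pointwise minimum: both sides vanish whenever some $\widebar{E}_iH_i$ is true, both equal $1$ on $E_1H_1 E_2H_2 E_3H_3$, and on the constituent $\bigwedge_{i\in S}\widebar{H}_i\bigwedge_{i\notin S} E_iH_i$ the minimum of the $\mathcal{C}_i$'s reduces to $\min_{i\in S} x_i$, which is exactly the prescribed value $x_S$ in (\ref{EQ:CF}). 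I do not expect any real obstacle; the only nontrivial bookkeeping, as in the Product t-norm case, is keeping track of the upper envelope $1-\sum_i x_i + \sum_{i<j} x_{ij}$ for $x_{123}$, and that is precisely where the assumed ordering does the work.
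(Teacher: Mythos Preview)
Your proposal is correct and follows essentially the same route as the paper: assume without loss of generality $x_1\le x_2\le x_3$, substitute $x_{12}=x_{13}=x_{123}=x_1$, $x_{23}=x_2$, and verify the inequalities characterizing coherence, then read off the identities $\C_{ij}=\min\{\C_i,\C_j\}$ and $\C_{123}=\min\{\C_1,\C_2,\C_3\}$ from the piecewise definitions. The only difference is cosmetic: the paper invokes Remark~\ref{REM:INEQPI} to reduce directly to the single chain (\ref{EQ:INEQPI}), whereas you additionally spell out the pairwise blocks of (\ref{EQ:SYSTEMPISTATEMENT}); this extra bookkeeping is harmless but not needed.
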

\begin{proof}
	From Remark \ref{REM:INEQPI}, the coherence of $\M$ amounts to the inequalities in (\ref{EQ:INEQPI}).
Without loss of generality, we assume that $x_1\leq x_2\leq x_3$.  Then 
 $x_{12}=T_{M}(x_1,x_2)=x_1$,
  $x_{13}=T_{M}(x_1,x_3)=x_1$,
   $x_{23}=T_{M}(x_2,x_3)=x_2$, and $x_{123}=T_{M}(x_1,x_2,x_3)=x_1$. 
	The inequalities (\ref{EQ:INEQPI}) become
	\begin{equation}\label{EQ:MIN}
\begin{array}{ll}
\max\{0,x_{1},x_{1}+x_2-x_3\}=x_1\,\;\leq  \;x_{1}\;\leq x_1=
\min\{x_{1},x_{2},1-x_3+x_{1}\}.
\end{array}	\end{equation}
	Thus,  the inequalities are satisfied and hence $\M$ is coherent. Moreover, from (\ref{EQ:CONJUNCTION}) and (\ref{EQ:CONJUNCTION3}) it follows that 
$\C_{ij}=T_{M}(\C_i,\C_j)=\min\{\C_i,\C_j\}$,  $i\neq j$, and $\C_{123}=T_{M}(\C_1,\C_2,\C_3)=\min\{\C_{1},\C_{2},\C_{3}\}$.
\qed \end{proof}
\begin{remark}
As we can see from 
 $(\ref{EQ:MIN})$ and Corollary \ref{COR:PIFOR3}, the assessment $x_{123}=\min\{x_1,x_2,x_3\}$ is the unique coherent extension on $\C_{123}$ of the assessment $
 (x_1,x_2,x_3,\min\{x_1,x_2\},\min\{x_1,x_3\},\min\{x_2,x_3\})$ on
 $\{\C_{1},\C_{2},\C_{3}, \C_{12}, \C_{13}, \C_{23}\}$. \\ We also notice that, if $\C_1\leq \C_2\leq \C_3$, then $\C_{12}=\C_1$, $\C_{13}=\C_1$, $\C_{23}=\C_2$, and $\C_{123}=\C_1$. Moreover,   $x_{12}=x_1$,
	$x_{13}=x_1$,
	$x_{23}=x_2$, and $x_{123}=x_1$. 
\end{remark}	

\subsection{On Lukasiewicz t-norm}
We observe that in general the results of Theorems \ref{THM:PROD} and \ref{THM:MIN} do not hold for the Lukasiewicz t-norm  (and hence for any given Frank t-norm), as shown in the example below. We recall that $T_L(x_1,x_2,x_3)=\max\{x_1+x_2+x_3-2,0\}$.
\begin{example}
The assessment
$(x_1,x_2,x_3,T_L(x_1,x_2),T_L(x_1,x_3),T_L(x_2,x_3)$, $T_L(x_1,x_2,x_3))$
on the family $\F=\{\C_{1},\C_{2},\C_{3}, \C_{12}, \C_{13}, \C_{23}, \C_{123}\}$, with 	$(x_1,x_2,x_3)=(0.5,0.6,0.7)$ is not coherent.
Indeed, by observing that  $T_L(x_1,x_2)=0.1$ 
$T_L(x_1,x_3)=0.2$, $T_L(x_2,x_3)=0.3$, and $T_L(x_1,x_2,x_3)=0$, formula (\ref{EQ:INEQPI}) becomes
$
\max\{0, 0.1+0.2-0.5,0.1+0.3-0.6,0.2+0.3-0.7\}\,\;\leq  \;0\;
\leq\;\; 
\min\{0.1,0.2,0.3,1-0.5-0.6-0.7+0.1+0.2+0.3\}$,
that is: 
$
\max\{0, -0.2\}\,\;\leq  \;0\;\leq 
\min\{0.1,0.2,0.3,-0.2\};
$
thus the inequalities are not satisfied and the assessment is not coherent.
\end{example}
More in general we have
\begin{theorem}\label{THM:LUK}
The assessment
$(x_1,x_2,x_3,T_L(x_1,x_2),T_L(x_1,x_3),T_L(x_2,x_3))$
on the family  $\F=\{\C_{1},\C_{2},\C_{3}, \C_{12}, \C_{13}, \C_{23}\}$, with 
$T_L(x_1,x_2)>0$, ${T_L(x_1,x_3)>0}$, ${T_L(x_2,x_3)>0}$ is coherent if and only if 
 $x_1+x_2+x_3-2\geq 0$. Moreover, when $x_1+x_2+x_3-2\geq 0$ the  unique coherent extension $x_{123}$ on $\C_{123}$ is $x_{123}=T_L(x_1,x_2,x_3)$.
\end{theorem}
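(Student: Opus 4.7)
The plan is to read off the claim directly from the characterization of coherence given in Theorem~\ref{THM:PIFOR3}, together with Corollary~\ref{COR:PIFOR3} for the moreover part, after substituting the explicit values $x_{ij}=T_L(x_i,x_j)$. The hypothesis $T_L(x_i,x_j)>0$ forces $x_i+x_j>1$, so on our domain one simply has $x_{ij}=x_i+x_j-1$ for every pair $\{i,j\}\subseteq\{1,2,3\}$.

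For the ``if and only if'' direction I would walk through the first five conditions of (\ref{EQ:SYSTEMPISTATEMENT}) (those not involving $x_{123}$), which characterize coherence of the six-element sub-assessment. Each upper bound $x_{ij}\leq\min\{x_i,x_j\}$ becomes $x_i+x_j-1\leq x_i$, i.e., $x_k\leq 1$. Each ``Fr\'echet-type'' cross term $x_{rs}+x_{st}-x_s$ collapses under the substitution to the \emph{single} expression $x_1+x_2+x_3-2$, which is $\leq x_i+x_j-1=x_{ij}$ exactly when $x_k\leq 1$. The ``$0\leq x_{ij}$'' pieces of the max are the standing hypothesis. Hence every constraint except one is automatic, and the only genuinely new inequality
\[
1-x_1-x_2-x_3+x_{12}+x_{13}+x_{23}\geq 0
\]
simplifies, after substitution, to exactly $x_1+x_2+x_3-2\geq 0$. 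This proves both directions.

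For the unique extension, assuming $x_1+x_2+x_3\geq 2$, I would compute the bounds $x_{123}'$ and $x_{123}''$ in (\ref{EQ:INECOR}). The three differences $x_{12}+x_{13}-x_1$, $x_{12}+x_{23}-x_2$, $x_{13}+x_{23}-x_3$ all evaluate to $x_1+x_2+x_3-2$, so $x_{123}'=\max\{0,x_1+x_2+x_3-2\}=x_1+x_2+x_3-2$. For $x_{123}''$, the fourth term $1-x_1-x_2-x_3+x_{12}+x_{13}+x_{23}$ equals the same quantity $x_1+x_2+x_3-2$, while each $x_{ij}=x_i+x_j-1$ dominates it because $x_k\leq 1$. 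Therefore $x_{123}''=x_1+x_2+x_3-2=x_{123}'$, forcing $x_{123}=T_L(x_1,x_2,x_3)$ as the unique coherent extension.

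The proof is disciplined substitution into formulas that are already on the shelf, so there is no real obstacle. The structural observation worth emphasising is that \emph{every} cross term collapses to the single quantity $x_1+x_2+x_3-2$: this simultaneously explains why only one non-trivial coherence constraint survives and why the admissible interval for $x_{123}$ degenerates to a point.
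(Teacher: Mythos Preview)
Your argument is correct and follows essentially the same route as the paper: substitute $x_{ij}=x_i+x_j-1$ into the system (\ref{EQ:SYSTEMPISTATEMENT}) and observe that every cross term collapses to $x_1+x_2+x_3-2$, so that the fifth inequality is the only non-trivial constraint and the interval $[x_{123}',x_{123}'']$ degenerates to a point. The paper organizes the logic by cases ($x_1+x_2+x_3-2<0$ versus $\geq 0$) and, for the ``if'' direction, exhibits the coherent seven-tuple and deduces coherence of the six-element sub-assessment from it; your presentation instead asserts up front that the first five conditions of (\ref{EQ:SYSTEMPISTATEMENT}) characterize coherence of the six-element family, which is true but not stated as such in the paper---however, your own computation $x_{123}'=x_{123}''$ in the ``moreover'' part retroactively supplies the needed extension, so no gap remains.
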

\begin{proof}
We distinguish two cases: $(i)$ 	$x_1+x_2+x_3-2< 0$; $(ii)$ 	$x_1+x_2+x_3-2\geq 0$.\\
Case $(i)$.
From	
(\ref{EQ:SYSTEMPISTATEMENT}) the inequality 
$1-x_1-x_2-x_3+x_{12}+x_{13}+x_{23}\geq 0$
is not satisfied because 
$
1-x_1-x_2-x_3+x_{12}+x_{13}+x_{23}=x_{1}+x_2+x_3-2<0.
$
Therefore the assessment is not coherent.\\
Case $(ii)$.
We set $x_{123}=T_L(x_1,x_2,x_3)=x_1+x_2+x_3-2$.
Then, by observing that   $0<x_i+x_j-1\leq x_1+x_2+x_3-2$, $i\neq j$,   formula (\ref{EQ:INEQPI}) becomes 
$\max\{0,x_{1}+x_2+x_3-2\}\,\;\leq  \;x_{1}+x_2+x_3-2\;
\leq\;\; 
\min\{x_{1}+x_2-1,x_{1}+x_3-1,x_{2}+x_3-1,x_1+x_2+x_3-2\}$,
that is:
$
\;x_{1}+x_2+x_3-2\;\leq \;x_{1}+x_2+x_3-2\;\leq \;x_{1}+x_2+x_3-2.
$
Thus,  the inequalities are satisfied and the
 assessment 
$
 (x_1,x_2,x_3,T_L(x_1,x_2),T_L(x_1,x_3),T_L(x_2,x_3), T_L(x_1,x_2,x_3))$ on $\{\C_{1},\C_{2},\C_{3}, \C_{12}, \C_{13}, \C_{23},\C_{123}\}$
  is coherent and the sub-assessment 
 $
  (x_1,x_2,x_3,T_L(x_1,x_2),T_L(x_1,x_3),T_L(x_2,x_3))
  $ on 
  $\F$ is coherent too.
	\qed \end{proof}
A result related with Theorem \ref{THM:LUK} is given below.
\begin{theorem}
	If the assessment
$(x_1,x_2,x_3,T_L(x_1,x_2),T_L(x_1,x_3),T_L(x_2,x_3)$, $T_L(x_1,x_2,x_3))$
on the family $\F=\{\C_{1},\C_{2},\C_{3}, \C_{12}, \C_{13}, \C_{23}, \C_{123} \}$, is such that $T_L(x_1,x_2,x_3)>0$, then the assessment is  coherent.	
\end{theorem}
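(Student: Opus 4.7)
The plan is to obtain this as a direct corollary of Theorem \ref{THM:LUK}. The crucial observation is that the hypothesis $T_L(x_1,x_2,x_3)>0$ means $x_1+x_2+x_3-2>0$, i.e.\ $x_1+x_2+x_3>2$. Since $x_k\le 1$ for each $k$, this gives for every pair $i\neq j$
\begin{equation*}
x_i+x_j \;=\; (x_1+x_2+x_3)-x_k \;>\; 2-1 \;=\;1,
\end{equation*}
so $T_L(x_i,x_j)=x_i+x_j-1>0$ for all three pairs. Hence the positivity hypotheses of Theorem \ref{THM:LUK} are satisfied.

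With these observations in place, Theorem \ref{THM:LUK} applies in its case $(ii)$: the sub-assessment $(x_1,x_2,x_3,T_L(x_1,x_2),T_L(x_1,x_3),T_L(x_2,x_3))$ on $\{\C_1,\C_2,\C_3,\C_{12},\C_{13},\C_{23}\}$ is coherent, and its unique coherent extension to $\C_{123}$ is precisely $x_{123}=T_L(x_1,x_2,x_3)$. Since the assessment in the statement uses exactly this value on $\C_{123}$, it follows that the full assessment on $\F$ is coherent.

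No real obstacle is anticipated; the entire argument is an immediate reduction to Theorem \ref{THM:LUK}, and the only step that is not completely routine is verifying that $T_L(x_1,x_2,x_3)>0$ does force the required pairwise positivity $T_L(x_i,x_j)>0$, which follows from the one-line computation above.
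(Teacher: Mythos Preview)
Your proposal is correct and rests on the same key observation as the paper's proof: that $T_L(x_1,x_2,x_3)=x_1+x_2+x_3-2>0$ forces $x_i+x_j-1>0$ for every pair $i\neq j$. The only cosmetic difference is that the paper re-verifies the coherence inequalities (\ref{EQ:INEQPI}) directly rather than invoking Theorem~\ref{THM:LUK}, but the underlying computation is identical and your reduction is, if anything, slightly more economical.
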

\begin{proof}
	We observe that $T_L(x_1,x_2,x_3)=x_1+x_2+x_3-2>0$; then 
	$x_i>0$, $i=1,2,3$, and  $0<x_i+x_j-1\leq x_1+x_2+x_3-2$, $i\neq j$.   
	Then formula (\ref{EQ:INEQPI})) becomes: 
	$\;\;\max\{0,x_{1}+x_2+x_3-2\}\,\;\leq  \;x_{1}+x_2+x_3-2\;\leq\\
	\leq\;\; 
	\min\{x_{1}+x_2-1,x_{1}+x_3-1,x_{2}+x_3-1,x_1+x_2+x_3-2\},$
	that is: \\ $x_{1}+x_2+x_3-2\;\leq \;x_{1}+x_2+x_3-2\;\leq \;x_{1}+x_2+x_3-2.$
\\	Thus,  the inequalities are satisfied and the assessment is coherent.
\qed \end{proof}	
\section{Conclusions} 
We have studied the relationship between the notions of conjunction and of Frank t-norms. 
We have shown that, under logical independence of events and coherence of prevision assessments, for a suitable $\lambda \in [0,+\infty]$ it holds that $\prev((A|H) \wedge (B|K))= T_\lambda(x,y)$ and $(A|H) \wedge (B|K)= T_\lambda(A|H,B|K)$.  Then, we have considered the case $A=B$, by determining the set of all coherent assessment $(x,y,z)$ on $(A|H,B|K,(A|H) \wedge (A|K))$. 
We have shown that, under coherence, for a suitable $\lambda \in [0,1]$ it holds that 
$(A|H) \wedge (A|K)= T_\lambda(A|H,A|K)$.
We have also studied the particular case where $A=B$ and $HK=\emptyset$. Then, we have considered the conjunction of three conditional events and we have shown that the prevision assessments produced by the Product t-norm, or the Minimum t-norm, are coherent. Finally, we have examined the 
Lukasiewicz t-norm and we have shown, by a counterexample, that coherence in general is not assured. 
We have given some  conditions for coherence when the prevision assessments are based on the Lukasiewicz t-norm. Future work should concern the deepening and generalization of the results of this paper.  
\\ \ \\
{\bf Acknowledgments}. We thank three anonymous referees for their useful comments. 


\end{document}